\documentclass{amsart}
\usepackage{graphicx}
\usepackage{ulem}
\usepackage{bm}
\usepackage{color}
\usepackage{amssymb, amsmath, amsthm}
\usepackage{bm}
\theoremstyle{plain}
\usepackage{graphicx}
\usepackage[aboveskip=-5pt,position=bottom]{caption}
\usepackage{natbib}
\usepackage{url}
\usepackage{enumerate}
\usepackage{rotating}
\usepackage{booktabs,arydshln}
\usepackage{caption} \captionsetup[table]{skip=10pt}
\usepackage{float}

\usepackage{accents}

\newtheorem{lemma}{Lemma}[section]

\newtheorem{theorem}[lemma]{Theorem}

\newtheorem{prop}[lemma]{Proposition}
\newtheorem{exam}[lemma]{\normalfont \scshape
 Example}
\newtheorem{rem}[lemma]{\normalfont \scshape Remark}

\newcommand{\R}{\mathbb{R}}
\newcommand{\N}{\mathbb{N}}

\newcommand{\norm}[1]{\left\Vert#1\right\Vert}

\newcommand{\abs}[1]{\left\vert#1\right\vert}
\newcommand{\set}[1]{\left\{#1\right\}}

\newcommand{\bfx}{\bm{x}}
\newcommand{\bfzero}{\bm{0}}

\newcommand{\bfone}{\bm{1}}
\newcommand{\bfa}{\bm{a}}
\newcommand{\bfb}{\bm{b}}

\newcommand{\bft}{\bm{t}}
\newcommand{\bfU}{\bm{U}}
\newcommand{\bfu}{\bm{u}}

\newcommand{\bfy}{\bm{y}}

\newcommand{\calS}{\mathcal S}

\usepackage[doublespacing]{setspace}

\allowdisplaybreaks[4]

\usepackage{hyperref}
\hypersetup{colorlinks,%
citecolor=blue,%
filecolor=green,%
linkcolor=red,%
urlcolor=violet,%
}

\begin{document}

\title[Conditional Tail Independence]{Conditional Tail Independence in Archimedean Copula Models}

\author{Michael Falk\and Simone Padoan\and Florian Wisheckel}
\address[1,3]{Institute of Mathematics, University of W\"{u}rzburg, W\"{u}rzburg, Germany}
\email{michael.falk@uni-wuerzburg.de}\email{florian.wisheckel@uni-wuerzburg.de}
\address[2]{Department of Decision Sciences,
Bocconi University of Milan,  Milano, Italy}
\email{simone.padoan@unibocconi.it}

\subjclass[2010]{Primary 60G70, secondary 62G32 62H05}%
\keywords{Archimedean copula; conditional distribution; asymptotic tail independence; domain of
attraction; extreme value distribution; Archimax copula; $D$-norm}%


\begin{abstract}
Consider a random vector $\bfU$, whose distribution function coincides in its upper tail with that
of an Archimedean copula. We report the fact that the conditional distribution of $\bfU$,
conditional on one of its components, has under a mild condition on the generator function
independent upper tails, no matter what the unconditional tail behavior is. This finding is extended to Archimax copulas.

\end{abstract}

\maketitle


\section{Introduction}

Let $\bfU=(U_1,\dots,U_d)$ be a random vector (rv), whose distribution function (df) $F$ is in the
domain of attraction of a multivariate extreme value df $G$, denoted by $F\in\mathcal D(G)$, i.e.,
there are constants $\bfa_n=(a_{n1},\dots,a_{nd})>\bfzero\in\R^d$,
$\bfb_n=(b_{n1},\dots,b_{nd})\in\R^d$, $n\in\N$, such that for each $\bfx=(x_1,\dots,x_d)\in\R^d$
\[
F^n(\bfa_n\bfx+\bfb_n) \to_{n\to\infty} G(\bfx).
\]
Note that all operations on vectors such as $\bfx+\bfy$, $\bfx\bfy$ etc. are always meant
componentwise.

The rv $\bfU$, or, equivalently, the df $F$, is said to have asymptotically independent (upper)
tails, if
\[
G(\bfx)=\prod_{i=1}^d G_i(x_i),
\]
where $G_i$, $1\le i\le d$, denote the univariate margins of $G$.

We require in this paper that the df F of $\bfU$ coincides in its upper tail with a \emph{copula},
say $C$, i.e., there exists $\bfu_0=(u_{01},\dots,u_{0d})\in(0,1)^d$ such that
 \[
 F(\bfu)=C(\bfu),\qquad \bfu\in[\bfu_0,\bfone]\subset\R^d.
 \]
 Each univariate margin of a copula is the uniform distribution $H(u)=u$, $0\le u\le 1$, and, thus,
 each univariate margin of $F$ equals $H(u)$ for $u\in[v_0,1]$, where $v_0:=\max_{1\le i\le
 d}u_{0i}$.

 The significance of copulas is due to Sklar's theorem (\citet{sklar59, sklar96}), by which an
 arbitrary multivariate df  can be represented as a copula together with its univariate margins. The
 dependence structure among the margins of an arbitrary rv is, therefore, determined by the copula.
 For an introduction to copulas we refer to \citet{nel06}.

We require in this paper that the upper tail of $C$ is that of an \emph{Archimedean copula}
$C_\varphi$, i.e., there exists a convex and strictly decreasing function
$\varphi:(0,1]\to[0,\infty)$ with $\varphi(1)=0$, such that
\[
C_\varphi(\bfu) =\varphi^{-1}\left(\varphi(u_1)+\dots+\varphi(u_d)\right)
\]
for $\bfu\in[\bfu_0,\bfone]\subset\R^d$, where $\bfu_0=(u_{01},\dots,u_{0d})\in(0,1)^d$.

A prominent example is $\varphi_p(s):=(1-s)^p$, $s\in[0,1]$, where $p\ge 1$. In this case we obtain
\begin{equation}\label{eqn:definition of logistic copula}
C_{\varphi_p}(\bfu)= 1-\left(\sum_{i=1}^d(1-u_i)^p \right)^{1/p}, \qquad \bfu\in[\bfu_0,\bfone].
\end{equation}

Note that
\[
C_{\varphi_p}(\bfu):=\max\left(0,1-\left(\sum_{i=1}^d(1-u_i)^p \right)^{1/p}\right),\qquad
\bfu\in[0,1]^d,
\]
defines a multivariate df only in dimension $d=2$, see, e.g., \citet[Examples 2.1, 2.2]{mcnn09}. But
one can find for arbitrary dimension $d\ge 2$ a rv, whose df satisfies equation
\eqref{eqn:definition of logistic copula}, see, e.g., \citet[(2.15)]{falk2019}. This is the reason,
why we require the  Archimedean structure of  $C_\varphi$ only on some upper interval
$[\bfu_0,\bfone]$ and we do not speak of $C_\varphi$ as a \emph{copula}, but rather of a
\emph{distribution function}.

The behavior of $C_\varphi(\bfu)$ for $\bfu$ close to $\bfone\in\R^d$ determines the upper tail
behavior of the components of $\bfU$. Precisely, suppose that $C_\varphi\in\mathcal D(G)$, i.e.,
\[
C_\varphi\left(\bfone+\frac{\bfx}n\right)^n\to_{n\to\infty} G(\bfx),\qquad \bfx\le\bfzero\in\R^d,
\]
where the norming constants are prescribed by the univariate margins of $C_\varphi$, which is the
df $H(u)=u$, $u\in[v_0,1]$. We obviously have for arbitrary $x\le 0$ and $n$ large enough
\[
H\left(1+\frac xn \right)^n = \left(1+\frac xn \right)^n \to\exp(x).
\]
The multivariate max-stable df $G$, consequently, has standard negative exponential margins
$G_i(x)=\exp(x)$, $x\le 0$.

Moreover, there exists a norm $\norm\cdot_D$ on $\R^d$, such that $G(\bfx)=\exp(-\norm{\bfx}_D)$,
$\bfx\le \bfzero\in\R^d$; see, e.g., \citet{falk2019}. This norm $\norm\cdot_D$ describes the
asymptotic tail dependence of the margins of $C_\varphi$; the index $D$, therefore, means
\emph{dependence}. In particular $\norm\cdot_D=\norm\cdot_1$ is the case of (asymptotic)
independence of the margins, whereas $\norm\cdot_D=\norm\cdot_\infty$ yields their total dependence.
For the df $C_{\varphi_p}$ in \eqref{eqn:definition of logistic copula} we obtain, for example, for
$n$ large,
\begin{align*}
C_{\varphi_p}\left(\bfone+\frac{\bfx}n \right)^n &= \left(1-\frac
1n\left(\sum_{i=1}^d\abs{x_i}^p\right)^{1/p} \right)^n\\
&\to_{n\to\infty} \exp\left(-\norm{\bfx}_p \right),\qquad \bfx=(x_1,\dots,x_d)\le\bfzero\in\R^d,
\end{align*}
where $\norm{\bfx}_p=\left(\sum_{i=1}^d\abs{x_i}^p\right)^{1/p}$, $p\ge 1$, is the logistic norm on
$\R^d$. In this case we have tail independence only for $p=1$.

In this paper we investigate the problem, if conditioning on a margin $U_j=u$ has an influence on
the tail dependence of the left margins $U_1,\dots,U_{j-1},U_{j+1},\dots,U_d$. Actually, we will
show that the rv $(U_1,\dots,U_{j-1},U_{j+1},\dots,U_d)$, conditional on $U_j=u$, has in general
independent tails, for each choice of $j$, no matter what the unconditional tail behavior is; see
Section \ref{sec:main theorem}. This is achieved under a mild condition on the generator function
$\varphi$, which is introduced in Section \ref{sec:condition on the generator function}.

\section{Condition on the generator function}\label{sec:condition on the generator function}

Our results are achieved under the following condition on the generator function $\varphi$. There
exists a number $p\ge 1$ such that
\[
\lim_{s\downarrow 0}\frac{\varphi(1-sx)}{\varphi(1-s)}=x^p,\qquad x>0\tag{C0}\label{C0}.
\]

\begin{rem}
\upshape
The exponent $p$ in condition \eqref{C0} is necessarily greater than one by the convexity of
$\varphi$, which can easily be seen as follows. We have for arbitrary $\lambda,x,y\in(0,1]$
\[
\varphi(\lambda x+(1-\lambda)y) \le\lambda \varphi(x)+(1-\lambda)\varphi(y).
\]
Setting $x=1-s$ and $y=1$, we obtain
\[
\varphi(\lambda(1-s)+1-\lambda)=\varphi(1-\lambda s) \le\lambda \varphi(1-s)
\]
and, thus,
\[
\lim_{s\downarrow 0}\frac{\varphi(1-\lambda s)}{\varphi(1-s)} =\lambda^p\le \lambda.
\]
But this requires $p\ge 1$.
\end{rem}

A df $C_\varphi$, whose generator satisfies condition \eqref{C0}, is in the domain of attraction of
a multivariate extreme value distribution. Precisely, we have the following result.

\begin{prop}\label{prop:domain of attraction for Archimedean copulas}
Suppose that the generator $\varphi$ satisfies condition \eqref{C0}. Then we have
$C_\varphi\in\mathcal D(G)$, where $G(\bfx)=\exp\left(-\norm{\bfx}_p\right)$,
$\bfx\le\bfzero\in\R^d$.
\end{prop}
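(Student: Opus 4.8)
The plan is to verify the defining convergence $C_\varphi\bigl(\bfone+\tfrac{\bfx}{n}\bigr)^n\to\exp(-\norm{\bfx}_p)$ directly, by taking logarithms and controlling $\varphi^{-1}$ near $0$. Fix $\bfx\le\bfzero\in\R^d$, write $x_i=-|x_i|$, and set $s=1/n$. For $n$ large enough the point $\bfone+\bfx/n$ lies in $[\bfu_0,\bfone]$, so that
\[
C_\varphi\Bigl(\bfone+\frac{\bfx}{n}\Bigr)=\varphi^{-1}\!\left(\sum_{i=1}^d\varphi\bigl(1-s|x_i|\bigr)\right).
\]
The first step is to understand $\sum_i\varphi(1-s|x_i|)$ as $s\downarrow 0$. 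By condition \eqref{C0}, $\varphi(1-s|x_i|)\sim |x_i|^p\,\varphi(1-s)$, so the sum is asymptotically equivalent to $\bigl(\sum_i|x_i|^p\bigr)\varphi(1-s)=\norm{\bfx}_p^{\,p}\,\varphi(1-s)$. Since $\varphi(1)=0$ and $\varphi$ is continuous and strictly decreasing, $\varphi(1-s)\downarrow 0$ as $s\downarrow 0$, hence the argument of $\varphi^{-1}$ tends to $0$ and $C_\varphi(\bfone+\bfx/n)\to 1$.

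The second, and main, step is to extract the precise rate. Write $t=\varphi(1-s)$, so $1-s=\varphi^{-1}(t)$, i.e. $\varphi^{-1}(t)=1-s$ with $s\to 0$ as $t\downarrow 0$. I want to show
\[
1-C_\varphi\Bigl(\bfone+\frac{\bfx}{n}\Bigr)=1-\varphi^{-1}\!\bigl(\norm{\bfx}_p^{\,p}\,t\,(1+o(1))\bigr)\sim \norm{\bfx}_p\,s=\frac{\norm{\bfx}_p}{n},
\]
from which $C_\varphi(\bfone+\bfx/n)^n=\bigl(1-\tfrac{\norm{\bfx}_p}{n}+o(\tfrac1n)\bigr)^n\to\exp(-\norm{\bfx}_p)$ follows by the standard limit. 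The key is therefore an inverse form of \eqref{C0}: for any constant $c>0$,
\[
\lim_{t\downarrow 0}\frac{1-\varphi^{-1}(ct)}{1-\varphi^{-1}(t)}=c^{1/p}.
\]
To see this, set $1-\varphi^{-1}(t)=s$ and $1-\varphi^{-1}(ct)=s'$, so that $t=\varphi(1-s)$ and $ct=\varphi(1-s')$, giving $\varphi(1-s')/\varphi(1-s)=c$. As $t\downarrow 0$ both $s,s'\to 0$, and writing $\varphi(1-s')/\varphi(1-s)=\bigl[\varphi(1-s\cdot(s'/s))/\varphi(1-s)\bigr]$ and invoking \eqref{C0} with $x=s'/s$ (one must first check $s'/s$ stays bounded and bounded away from $0$, which follows from monotonicity of $\varphi$ together with \eqref{C0} applied with fixed test values $x$), the left side converges to $(s'/s)^p$; hence $(s'/s)^p\to c$, i.e. $s'/s\to c^{1/p}$, which is the claim. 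Applying this with $c=\norm{\bfx}_p^{\,p}$ (and absorbing the $1+o(1)$ factor by the same argument, or by a sandwich using \eqref{C0}'s uniformity on compact $x$-intervals, which holds automatically since these are monotone convergences to a continuous limit) gives $1-C_\varphi(\bfone+\bfx/n)\sim\norm{\bfx}_p\,(1-\varphi^{-1}(t))=\norm{\bfx}_p\,s$.

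I expect the main obstacle to be the bookkeeping around the two-sided control of the ratio $s'/s$ and the absorption of the $1+o(1)$ factor coming from replacing $\sum_i\varphi(1-s|x_i|)$ by $\norm{\bfx}_p^{\,p}\varphi(1-s)$: one needs that the convergence in \eqref{C0} can be upgraded to hold uniformly for $x$ in compact subsets of $(0,\infty)$, which is a standard consequence of monotone convergence of monotone functions to a continuous limit (a Dini-type argument, essentially Pólya's theorem), but it must be stated carefully. A clean way to organize this is to prove the inverse statement first as a lemma, then squeeze $C_\varphi(\bfone+\bfx/n)$ between $\varphi^{-1}\bigl((1-\delta)\norm{\bfx}_p^p\,t\bigr)$ and $\varphi^{-1}\bigl((1+\delta)\norm{\bfx}_p^p\,t\bigr)$ for arbitrary $\delta>0$ and all small $s$, take $n$th powers, let $n\to\infty$, and then let $\delta\downarrow 0$. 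Edge cases $x_i=0$ contribute $\varphi(1)=0$ to the sum and can be dropped; the case $\bfx=\bfzero$ is trivial since $G(\bfzero)=1$.
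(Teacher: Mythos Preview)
Your proposal is correct and follows essentially the same route as the paper: first establish the inverse regular-variation statement $\lim_{t\downarrow 0}\bigl(1-\varphi^{-1}(ct)\bigr)/\bigl(1-\varphi^{-1}(t)\bigr)=c^{1/p}$ (this is the paper's equation \eqref{eqn:property of inverse function}), then combine it with \eqref{C0} to extract the rate $1-C_\varphi(\bfone+\bfx/n)\sim \norm{\bfx}_p/n$. The paper writes the final step as a single ratio with $\varphi(1-1/n)$ factored out inside $\varphi^{-1}$, rather than your sandwich with $(1\pm\delta)\norm{\bfx}_p^p$, but the content is identical; in fact you are more explicit than the paper about the uniformity/boundedness needed to pass a moving argument through \eqref{C0} and its inverse.
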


\begin{proof}
First we show that condition \eqref{C0} implies for $x>0$
\begin{equation}\label{eqn:property of inverse function}
\lim_{s\downarrow 0}\frac{1-\varphi^{-1}(sx)}{1-\varphi^{-1}(s)} = x^{1/p}.
\end{equation}
Choose $\delta_{sx},\delta_s\in(0,1)$ such that
\[
\varphi(1-\delta_{sx})=sx,\quad \varphi(1-\delta_{s})=s,
\]
i.e.,
\[
\varphi^{-1}(sx)= 1-\delta_{sx},\quad \varphi^{-1}(s)= 1-\delta_{s}.
\]

Condition \eqref{C0} implies for $s\downarrow 0$
\[
x=\frac{\varphi(1-\delta_{sx})}{\varphi(1-\delta_s)}=
\frac{\varphi\left(1-\delta_s\frac{\delta_{sx}} {\delta_s}\right)}{\varphi(1-\delta_s)}\sim
\left(\frac{\delta_{sx}}{\delta_s}\right)^p,
\]
where $\sim$ means that the ratio of the left hand side and the right hand side converges to one as
$s$ converges to zero. But this is
\[
\lim_{s\downarrow 0}\frac{1-\varphi^{-1}(sx)}{1-\varphi^{-1}(s)} = x^{1/p}.
\]

Next we show that for $\bfx=(x_1,\dots,x_d)\le\bfzero\in\R^d$
\[
\lim_{n\to\infty}C_\vartheta^n\left(\bfone+\frac{\bfx}n\right) =
\lim_{n\to\infty}\left[\varphi^{-1}\left(\sum_{i=1}^d\varphi\left(1+\frac{x_i}n\right) \right)
\right]^n =\exp\left(-\norm{\bfx}_p\right).
\]
Taking logarithms on both sides, this is equivalent with
\[
\lim_{n\to\infty} n\left[1-\varphi^{-1}\left(\sum_{i=1}^d\varphi\left(1+\frac{x_i}n\right)
\right)\right] = \norm{\bfx}_p.
\]
Write
\[
\frac 1n = 1-\varphi^{-1}\left(\varphi\left(1-\frac 1n\right)\right).
\]
Then
\begin{align*}
n\left[1-\varphi^{-1}\left(\sum_{i=1}^d\varphi\left(1+\frac{x_i}n\right) \right)\right] &=
\frac{1-\varphi^{-1}\left(\sum_{i=1}^d\varphi\left(1+\frac{x_i}n\right)
\right)}{1-\varphi^{-1}\left(\varphi\left(1-\frac 1n\right)\right)}\\
&= \frac{1-\varphi^{-1}\left(\varphi\left(1-\frac
1n\right)\sum_{i=1}^d\frac{\varphi\left(1+\frac{x_i}n\right)}{\varphi\left(1-\frac 1n \right)}
\right)}{1-\varphi^{-1}\left(\varphi\left(1-\frac 1n\right)\right)}\\
&\to_{n\to\infty} \left(\sum_{i=1}^d(-x_i)^p \right)^{1/p}
\end{align*}
by condition \eqref{C0} and equation \eqref{eqn:property of inverse function}, which is the
assertion.
\end{proof}

Condition \eqref{C0} on $\varphi$ is, for example, implied by the condition
\[
\lim_{s\downarrow 0} \frac{\varphi(1-s)}{s^p}=A\tag{C1}\label{C1}
\]
for some constant $A>0$ and $p\ge 1$, which is obviously satisfied by the generator
$\varphi_p(s)=(1-s)^p$.

Condition \eqref{C1} is by l'Hospital's rule implied by
\[
-\lim_{s\downarrow 0} \frac{\varphi'(1-s)}{s^{p-1}}=pA.\tag{C2}\label{C2}
\]
As a consequence, \eqref{C2} implies the condition
\[
-\lim_{s\downarrow 0} \frac{s\varphi'(1-s)}{\varphi(1-s)}=p.\tag{C3}\label{C3}
\]
\citet[Theorem 4.1]{chas09} showed, among others, that a copula $C_\varphi$, whose generator
satisfies \eqref{C3}, is in the domain of attraction of $G(\bfx)=\exp(-\norm{\bfx}_p)$,
$\bfx\le\bfzero\in\R^d$; see also \citet[Corollary 3.1.15]{falk2019}. In this case we have tail
independence only if $p=1$.

The Clayton family with generator $\varphi_\vartheta(t):= \left(t^{-\vartheta}-1\right)/\vartheta$
and $\vartheta > 0$, satisfies condition \eqref{C2} with $p=1$ and $A=1$. As a consequence, we have
independent tails for each $\vartheta>0$.

The Frank family has the generator
\[
\varphi_\vartheta(t):= -\log\left(\frac{e^{-\vartheta t}-1}{e^{-\vartheta}-1}\right),\qquad
\vartheta>0.
\]
It satisfies condition \eqref{C0} with $p=1$, i.e., we have again independent tails for each
$\vartheta>0$.

Consider, on the other hand, the generator $\varphi_\vartheta(t):=(-\log(t))^\vartheta$,
$\vartheta\ge 1$, of the Gumbel-Hougaard family of Archimedean copulas. This generator satisfies
condition \eqref{C0} with $p=\vartheta$ and, thus, we have tail independence only for
$\vartheta=1$.

\section{Main Theorem}\label{sec:main theorem}

In this section we establish conditional tail independence of the margins of $C_\varphi$, if the
generator $\varphi$ satisfies condition \eqref{C0}. First we compute the conditional df of
$(U_1,\dots,U_{j-1},U_{j+1},\dots,U_d)$, given that $U_j=u$.

\begin{lemma}\label{lem:representation of conditional df}
We have for $j\in\set{1,\dots,d}$ and
$\bfu=(u_1,\dots,u_{j-1},u,u_{j+1},\dots,u_d)\in[\bfu_0,\bfone)$
\begin{align*}
H_{j,u}(u_1,\dots,u_{j-1},u_{j+1},\dots,u_d)&:=P(U_i\le u_i,\,1\le i\le d,\, i\not=j\mid U_j=u)\\
 &= \frac{\varphi'(u)}{\varphi'(C(\bfu))}\\
&=\frac{\varphi'(u)}{\varphi'\left(\varphi^{-1}\left(\varphi(u)+\sum_{1\le i\le d,
\,i\not=j}\varphi(u_i) \right) \right)},
\end{align*}
provided the derivative $\varphi'(v)$ exists in a neighborhood of $u$, that $\varphi'$ is continuous
at $u$ with $\varphi'(u)\not=0$, and that $C(\bfu)\not=0$ as well.
\end{lemma}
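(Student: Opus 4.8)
The plan is to compute the conditional distribution function by differentiating the joint distribution function with respect to the conditioning variable, following the standard recipe for conditional laws of absolutely continuous random vectors. Fix $j$; for notational simplicity take $j=d$, so we condition on $U_d=u$ and consider $(U_1,\dots,U_{d-1})$. On the upper box $[\bfu_0,\bfone)$ the joint distribution function agrees with $C_\varphi(\bfu)=\varphi^{-1}\bigl(\sum_{i=1}^d\varphi(u_i)\bigr)$, so the marginal density of $U_d$ at $u$ is $1$ (since the margin is uniform there), and the joint ``sub-density'' in the $d$-th coordinate is $\partial_{u_d}C_\varphi(\bfu)$. The conditional distribution function is then the ratio $H_{d,u}(u_1,\dots,u_{d-1})=\partial_{u_d}C_\varphi(u_1,\dots,u_{d-1},u)\big/\,1$.

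The core computation is therefore the single partial derivative
\[
\frac{\partial}{\partial u_d}\varphi^{-1}\!\left(\sum_{i=1}^d\varphi(u_i)\right)
=(\varphi^{-1})'\!\left(\sum_{i=1}^d\varphi(u_i)\right)\cdot\varphi'(u_d),
\]
and then one uses the inverse-function rule $(\varphi^{-1})'(y)=1/\varphi'(\varphi^{-1}(y))$. Writing $y=\sum_{i=1}^d\varphi(u_i)$ so that $\varphi^{-1}(y)=C_\varphi(\bfu)=C(\bfu)$, this gives
\[
\frac{\partial}{\partial u_d}C_\varphi(\bfu)=\frac{\varphi'(u_d)}{\varphi'\bigl(\varphi^{-1}(y)\bigr)}=\frac{\varphi'(u)}{\varphi'(C(\bfu))},
\]
which is exactly the claimed formula, the third displayed line being just $C(\bfu)$ written out explicitly. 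Then I verify the edge: for $\bfu$ with the relevant coordinates in $[\bfu_0,\bfone)$, the marginal df of $U_d$ near $u$ is $H(v)=v$, hence has density identically $1$, so dividing by it is harmless; this is why the hypotheses only ask $C(\bfu)\neq0$ (so the argument of the outer $\varphi$ is interior) rather than anything about the conditioning margin.

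The main technical obstacle is justifying the differentiation-under-conditioning rigorously, i.e.\ that $H_{j,u}$ obtained this way is genuinely a version of the regular conditional distribution $P(U_i\le u_i,\ i\neq j\mid U_j=u)$. The clean way is: for any Borel set $B$ of conditioning values with $\bfu_0$-admissible coordinates, $P(U_i\le u_i,\ i\neq j,\ U_j\in B)=\int_B \partial_{u_j}C_\varphi(u_1,\dots,u_j,\dots)\,\d u_j$, which follows from absolute continuity of $C_\varphi$ in the $j$-th argument on the upper box (guaranteed by the stated hypotheses: $\varphi'$ exists in a neighbourhood of $u$, is continuous there, and nonzero, so $\varphi^{-1}$ is $C^1$ near $y$ and the chain rule applies); since $U_j$ has density $1$ on this range, the disintegration identity $P(\cdots,U_j\in B)=\int_B H_{j,u}(\cdots)\,\d u$ identifies $H_{j,u}$ as the conditional df at almost every $u$, and by continuity of the right-hand expression in $u$ it holds at the specified $u$. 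I expect the rest — the chain rule and inverse-function manipulations — to be entirely routine; the only thing to state carefully is the set of $\bfu$ for which the hypotheses hold and the argument of each $\varphi'$ lands where it should.
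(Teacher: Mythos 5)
Your proof is correct and follows essentially the same route as the paper: both reduce the claim to the single partial derivative $\partial_{u_d}C_\varphi(\bfu)=(\varphi^{-1})'\big(\sum_{i=1}^d\varphi(u_i)\big)\,\varphi'(u_d)$ combined with the inverse-function rule, using that the conditioning margin is uniform (density $1$) on the upper box. The only difference is in how the conditioning on the null event $\{U_j=u\}$ is made precise: the paper takes the elementary definition $\lim_{\varepsilon\downarrow 0}P(\cdots,U_d\in[u,u+\varepsilon])/P(U_d\in[u,u+\varepsilon])$, i.e.\ a one-sided difference quotient, whereas you run the same computation through the disintegration identity and then pin down the value at the specific $u$ by continuity of the resulting expression.
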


\begin{proof} For notational simplicity we establish the result for the choice $j=d$.
We have for for $\bfu=(u_1,\dots,u_d)\in[\bfu_0,\bfone)$
\begin{align*}
&P(U_i\le u_i,\,1\le i\le d-1\mid U_d=u_d)\\
&=\lim_{\varepsilon\downarrow 0}\frac{P(U_i\le u_i,\,1\le i\le d-1,\, U_d\in[u_d,u_d+\varepsilon])}
{P(U_d\in[u_d,u_d+\varepsilon])}\\
&=\lim_{\varepsilon\downarrow 0} \frac{P(U_i\le u_i,\,1\le i\le d-1,\, U_d\le u_d+\varepsilon) -
P(U_i\le u_i,\,1\le i\le d-1,\, U_d\le u_d)}{\varepsilon}\\
&=\lim_{\varepsilon\downarrow 0}\frac{\varphi^{-1}\left(\sum_{i=1}^{d-1}\varphi(u_i)+
\varphi(u_d+\varepsilon) \right)-\varphi^{-1}\left(\sum_{i=1}^d\varphi(u_i) \right)}{\varepsilon}\\
&= \left(\varphi^{-1}\right)'\left(\sum_{i=1}^d\varphi(u_i) \right)\,\varphi'(u_d)\\
&= \frac{\varphi'(u_d)}{\varphi'\left(\varphi^{-1}\left(\sum_{i=1}^d\varphi(u_i) \right) \right)}\\
&= \frac{\varphi'(u_d)}{\varphi'(C_\varphi(\bfu))},
\end{align*}
which is the assertion.
\end{proof}

Note that the univariate margins of the df $H_{j,u}$, $1\le j\le d$, coincide in their upper tails,
where they are equal to
\[
H_u(v):=\frac{\varphi'(u)}{\varphi'\Big(\varphi^{-1}\left(\varphi(u)+\varphi(v) \right)\Big)},\qquad
v_0\le v\le 1,
\]
with $v_0=\max_{1\le i\le d}u_{0i}$.

The upper endpoint of $H_u$ is one, and, therefore, if the df $H_u$ is in the domain of attraction
of a univariate extreme value df $G$, then the family of negative Weibull distributions
$G_\alpha(x):=\exp\left(-\abs{x}^\alpha\right)$, $x\le 0$, with $\alpha>0$, is the first choice.
Note that $\alpha=1$ yields the standard negative exponential distribution.

The univariate df $H_u$ is in the domain of attraction of $G_\alpha$ for some $\alpha>0$ if and only
if (iff)
\[
\lim_{s\downarrow 0}\frac{1-H_u(1-sx)}{1-H_u(1-s)}=x^\alpha,\qquad x>0,
\]
see, e.g., \citet[Theorem 2.1.2]{gal87}.

\begin{lemma}\label{lem:characterization of univariate domain of attraction}
Suppose that the second derivative of $\varphi$ exists in a neighborhood of $u>v_0$, and that it is
continuous in $u$ with $\varphi''(u)\not=0\not=\varphi'(u)$. The univariate df $H_u$ satisfies
$H_u\in\mathcal D(G_p)$ for some $p\ge 1$ iff $\varphi$ satisfies condition \eqref{C0}.
\end{lemma}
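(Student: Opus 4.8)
The plan is to reduce the statement about $H_u \in \mathcal D(G_p)$ to the domain-of-attraction criterion quoted just before the lemma, namely
\[
\lim_{s\downarrow 0}\frac{1-H_u(1-sx)}{1-H_u(1-s)}=x^p,\qquad x>0,
\]
and then show this limit relation is equivalent to condition \eqref{C0} on $\varphi$. The first step is to get a workable expression for $1-H_u(1-s)$ as $s\downarrow 0$. Writing $v=1-s$ and setting $w(v):=\varphi^{-1}(\varphi(u)+\varphi(v))$, we have $w(1)=\varphi^{-1}(\varphi(u))=u$, and
\[
1-H_u(v)=1-\frac{\varphi'(u)}{\varphi'(w(v))}=\frac{\varphi'(w(v))-\varphi'(u)}{\varphi'(w(v))}.
\]
Since $\varphi'$ is continuous at $u$ with $\varphi'(u)\neq 0$, the denominator tends to $\varphi'(u)\neq 0$, so the asymptotic behavior of $1-H_u(1-s)$ is governed by $\varphi'(w(1-s))-\varphi'(u)$. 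Using that $\varphi''$ exists near $u$ and is continuous at $u$, a first-order Taylor expansion gives $\varphi'(w(1-s))-\varphi'(u)\sim \varphi''(u)\,(w(1-s)-u)$, so everything comes down to the asymptotics of $u-w(1-s)$, i.e. of $1-\varphi^{-1}(\varphi(u)+\varphi(1-s))$ as $s\downarrow 0$.

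The second step is to analyze $u-w(1-s)$. Since $\varphi(1-s)\downarrow 0$, we have $\varphi(u)+\varphi(1-s)\downarrow\varphi(u)$, and since $\varphi^{-1}$ is differentiable at $\varphi(u)$ with $(\varphi^{-1})'(\varphi(u))=1/\varphi'(u)$, we get
\[
u-w(1-s)=\varphi^{-1}(\varphi(u))-\varphi^{-1}(\varphi(u)+\varphi(1-s))\sim -\frac{\varphi(1-s)}{\varphi'(u)}.
\]
Combining the two steps,
\[
1-H_u(1-s)\sim \frac{\varphi''(u)}{\varphi'(u)}\cdot\frac{-\varphi(1-s)}{\varphi'(u)}
=-\frac{\varphi''(u)}{\varphi'(u)^2}\,\varphi(1-s),
\]
and the leading constant is a fixed nonzero number (one should check its sign is consistent with $1-H_u(1-s)>0$, which follows from monotonicity of $\varphi'$). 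Hence
\[
\frac{1-H_u(1-sx)}{1-H_u(1-s)}\to \lim_{s\downarrow 0}\frac{\varphi(1-sx)}{\varphi(1-s)},
\]
in the sense that one limit exists iff the other does, and they are equal. Therefore $H_u\in\mathcal D(G_p)$ (equivalently, the ratio on the left converges to $x^p$ for all $x>0$) holds iff $\varphi$ satisfies \eqref{C0} with the same exponent $p$. This establishes both directions simultaneously.

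I expect the main obstacle to be making the Taylor-expansion / asymptotic-substitution argument rigorous as genuine "$\sim$" statements rather than formal manipulations, and in particular handling the composition of two asymptotic relations: one needs that $w(1-s)\to u$ with $w(1-s)$ staying in the neighborhood of $u$ where $\varphi''$ exists, so that the expansion $\varphi'(w(1-s))-\varphi'(u)=\varphi''(u)(w(1-s)-u)+o(w(1-s)-u)$ is legitimate, and then that dividing by the (bounded, bounded-away-from-zero) quantities $\varphi'(w(1-s))$ preserves the ratio asymptotics uniformly in $x$ over the relevant range. A minor subtlety worth a sentence: the criterion from \citet[Theorem 2.1.2]{gal87} is stated as an iff for membership in $\mathcal D(G_\alpha)$, so once the ratio limit is shown equivalent to \eqref{C0} there is nothing more to prove; but one should note that \eqref{C0} already forces $p\ge 1$ (by the Remark following its statement), which is consistent with the negative-Weibull indexing $G_p$ requiring $p>0$ and with the claim "for some $p\ge 1$."
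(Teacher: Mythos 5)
Your proof follows the paper's argument exactly: two Taylor expansions yield $1-H_u(1-s)\sim \frac{\varphi''(u)}{\varphi'(u)^2}\,\varphi(1-s)$ with a fixed nonzero constant, after which the quoted ratio criterion for $\mathcal D(G_p)$ reduces, in both directions, to condition \eqref{C0}. The only flaw is a sign slip: in the final combination you substitute $u-w(1-s)$ where $w(1-s)-u$ is required, so your constant comes out as $-\varphi''(u)/\varphi'(u)^2<0$, contradicting the positivity of $1-H_u(1-s)$ that you yourself ask to be checked; the correct constant is $+\varphi''(u)/\varphi'(u)^2>0$ (using $\varphi'<0$, $\varphi''\ge 0$), and since the constant cancels in the ratio the equivalence argument is unaffected.
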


\begin{proof}
Applying Taylor's formula twice shows that
\begin{align*}
1-H_u(1-s) &= \frac{\varphi'\Big(\varphi^{-1}\big(\varphi(u)+\varphi(1-s) \big) \Big)-\varphi'(u)}
{\varphi'\Big(\varphi^{-1}\big(\varphi(u)+\varphi(1-s) \big) \Big)}\\
&\sim \frac{\varphi''(u)}{\varphi'(u)^2}\varphi(1-s)
\end{align*}
as $s\downarrow 0$, which is the assertion.
\end{proof}

The next result is our main theorem.

\begin{theorem}\label{theo:main theorem}
Suppose the generator $\varphi$ of $C_\varphi$ satisfies condition \eqref{C0}. Then, if $u> u_{0j}$,
and $\varphi$ satisfies the differentiability conditions in Lemma \ref{lem:characterization of
univariate domain of attraction}, we obtain for $\bfx=(x_1,\dots,x_d)\le\bfzero\in\R^{d-1}$
\[
H_{j,u}\left(\bfone+ca_n\bfx\right)^n\to_{n\to\infty}\exp\left(-\sum_{i=1}^{d-1}(-x_i)^p \right),
\]
with $c:= \big(\varphi'(u)^2/\varphi''(u)\big)^{1/\alpha}$ and $a_n:=1-\varphi^{-1}(1/n)$, $n\ge
n_0$.
\end{theorem}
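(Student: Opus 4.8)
The plan is to reduce the multivariate conditional limit to a product of univariate limits, exploiting the explicit form of $H_{j,u}$ from Lemma \ref{lem:representation of conditional df}. For notational ease I would again fix $j=d$ and write $\bfx=(x_1,\dots,x_{d-1})$. Taking logarithms, the claim is equivalent to
\[
n\bigl[1-H_{d,u}(\bfone+ca_n\bfx)\bigr]\to_{n\to\infty}\sum_{i=1}^{d-1}(-x_i)^p.
\]
Using the representation $H_{d,u}(u_1,\dots,u_{d-1})=\varphi'(u)/\varphi'\bigl(\varphi^{-1}(\varphi(u)+\sum_{i=1}^{d-1}\varphi(u_i))\bigr)$ with $u_i=1+ca_nx_i$, I would expand via Taylor's formula exactly as in Lemma \ref{lem:characterization of univariate domain of attraction}: since $\varphi(1)=0$ and $\varphi'(1)$ is finite, $\varphi(1+ca_nx_i)\sim$ (linear or higher-order term in $ca_nx_i$), so the sum $S_n:=\sum_{i=1}^{d-1}\varphi(1+ca_nx_i)\downarrow 0$, and then
\[
1-H_{d,u}(\bfone+ca_n\bfx)=\frac{\varphi'\bigl(\varphi^{-1}(\varphi(u)+S_n)\bigr)-\varphi'(u)}{\varphi'\bigl(\varphi^{-1}(\varphi(u)+S_n)\bigr)}\sim\frac{\varphi''(u)}{\varphi'(u)^2}\,S_n
\]
as $n\to\infty$, by continuity of $\varphi''$ at $u$ and $\varphi'(u)\neq 0$.

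Next I would handle the asymptotics of $n S_n$. The normalizing sequence is $a_n=1-\varphi^{-1}(1/n)$, equivalently $\varphi(1-a_n)=1/n$, so $n\varphi(1-a_n)=1$. Writing $\varphi(1+ca_nx_i)=\varphi(1-a_n(-cx_i))$ and using condition \eqref{C0} with the scaling parameter $-cx_i>0$ (here $a_n\downarrow 0$ plays the role of $s\downarrow 0$), I get
\[
\frac{\varphi(1-a_n(-cx_i))}{\varphi(1-a_n)}\to(-cx_i)^p,
\]
hence $n\varphi(1+ca_nx_i)=n\varphi(1-a_n)\cdot\frac{\varphi(1-a_n(-cx_i))}{\varphi(1-a_n)}\to(-cx_i)^p=(-c)^p(-x_i)^p$. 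Summing over $i=1,\dots,d-1$ gives $nS_n\to(-c)^p\sum_{i=1}^{d-1}(-x_i)^p$. Combining with the previous paragraph,
\[
n\bigl[1-H_{d,u}(\bfone+ca_n\bfx)\bigr]\to\frac{\varphi''(u)}{\varphi'(u)^2}\,(-c)^p\sum_{i=1}^{d-1}(-x_i)^p.
\]
Finally, plugging in $c=\bigl(\varphi'(u)^2/\varphi''(u)\bigr)^{1/\alpha}$ I must check that the constant collapses to $1$; note this forces the identification $\alpha=p$ (consistent with $H_u\in\mathcal D(G_p)$ from Lemma \ref{lem:characterization of univariate domain of attraction}), so that $\frac{\varphi''(u)}{\varphi'(u)^2}\cdot c^p=\frac{\varphi''(u)}{\varphi'(u)^2}\cdot\frac{\varphi'(u)^2}{\varphi''(u)}=1$ and, because $x_i\le 0$, $(-c)^p=c^p$. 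This yields the stated limit.

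The main obstacle is really a bookkeeping one: making sure the Taylor expansion $1-H_{d,u}\sim\frac{\varphi''(u)}{\varphi'(u)^2}S_n$ is rigorous and not merely formal. The subtlety is that $S_n$ is a \emph{sum} of several terms each only known through the regular-variation-type condition \eqref{C0}, and one must argue that $\varphi^{-1}(\varphi(u)+S_n)\to u$ and then apply the second-order expansion of $\varphi'$ around $u$ uniformly enough that the error terms are $o(S_n)$; this uses continuity of $\varphi''$ at $u$ together with $\varphi''(u)\neq 0\neq\varphi'(u)$, exactly the hypotheses imported from Lemma \ref{lem:characterization of univariate domain of attraction}. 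A second, more cosmetic point is confirming $\alpha=p$; this should follow by comparing the univariate version of the computation with Lemma \ref{lem:characterization of univariate domain of attraction}, where $H_u\in\mathcal D(G_p)$ is established, so $\alpha$ in the definition of $c$ is indeed $p$. Once these points are in place, the factorization of the exponential limit across the $d-1$ coordinates — hence conditional tail independence — is automatic from the additive structure $\sum_{i=1}^{d-1}(-x_i)^p$ inside $\exp(-\,\cdot\,)$.
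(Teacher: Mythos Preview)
Your proposal is correct and follows essentially the same route as the paper: reduce to the linearized statement $n\bigl[1-H_{d,u}(\bfone+ca_n\bfx)\bigr]\to\sum_{i=1}^{d-1}(-x_i)^p$, use the explicit form of $H_{d,u}$ from Lemma~\ref{lem:representation of conditional df}, Taylor-expand $\varphi'\bigl(\varphi^{-1}(\varphi(u)+S_n)\bigr)$ around $u$ to extract the factor $\varphi''(u)/\varphi'(u)^2$, and then invoke condition~\eqref{C0} with $s=a_n$ (using $\varphi(1-a_n)=1/n$) to obtain $nS_n\to c^p\sum_{i=1}^{d-1}(-x_i)^p$. The only difference is cosmetic: the paper carries out the two Taylor steps explicitly with intermediate points $\vartheta_n,\xi_n$, whereas you package them by reference to the computation in Lemma~\ref{lem:characterization of univariate domain of attraction}; and you make the identification $\alpha=p$ and the cancellation $\tfrac{\varphi''(u)}{\varphi'(u)^2}\,c^p=1$ explicit, which the paper leaves implicit in its final line. (One notational nit: write $(-cx_i)^p=c^p(-x_i)^p$ directly, since $c>0$ and $-x_i\ge 0$; the intermediate ``$(-c)^p=c^p$'' is ill-defined for non-integer $p$.)
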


Note that the convexity of $\varphi$ implies that $\varphi''(u)\ge 0$.

\begin{rem}\upshape
The preceding result shows tail independence of $H_{j,u}$, as the limiting df is the product of its
margins.

Lemma \ref{lem:characterization of univariate domain of attraction} implies, moreover, that also the
reverse implication in the previous result holds, i.e., if $H_{j,u}$ is in the domain of attraction
of a multivariate max-stable df $G$ with negative Weibull margins having parameter at least one,
then condition \eqref{C0} is satisfied by Lemma \ref{lem:characterization of univariate domain of
attraction}, and $G$ has by the preceding result identical independent margins.

Finally, by the preceding arguments, we have $H_{j,u}\in\mathcal D(G)$, where $G$ has negative
Weibull margins, iff just one univariate margin of $H_{j,u}$ is in the domain of attraction of a
univariate extreme value distribution, and in this case $G$ has identical and independent margins.
\end{rem}

\begin{proof}
For notational simplicity we establish this result for $j=d$. It is sufficient to establish for
$\bfx=(x_1,\dots,x_d)\le\bfzero\in\R^{d-1}$
\begin{equation}\label{eqn:equivalent formulation of main result}
n\big(1-H_{d,u}\left(\bfone+ca_n\bfx\right)\big)\to_{n\to\infty} \sum_{i=1}^{d-1}(-x_i)^p.
\end{equation}

We know from Lemma \ref{lem:representation of conditional df} that for
$(u_1,\dots,u_{d-1},u)\in[\bfu_0,\bfone]$,
\begin{equation}\label{eq:conditional}
H_{d,u}(u_1,\dots,u_{d-1})=\frac{\varphi'(u)}{\varphi'\left(\varphi^{-1}\left(\varphi(u)+\sum_{i=1}^{d-1}\varphi(u_i)
\right) \right)}.
\end{equation}

As a consequence we obtain
\begin{align*}
&n\big(1-H_{d,u}\left(\bfone+ca_n\bfx\right)\big)\\
&= n\left(1-
\frac{\varphi'(u)}{\varphi'\left(\varphi^{-1}\left(\varphi(u)+\sum_{i=1}^{d-1}\varphi\left(1+ca_n
x_i\right) \right) \right)} \right)\\
&= n \frac{\varphi'\left(\varphi^{-1}\left(\varphi(u)+\sum_{i=1}^{d-1}\varphi\left(1+ca_n x_i\right)
\right)
\right)-\varphi'(u)}{\varphi'\left(\varphi^{-1}\left(\varphi(u)+\sum_{i=1}^{d-1}\varphi\left(1+ca_n
x_i\right) \right) \right)},
\end{align*}
where the denominator converges to $\varphi'(u)$ as $n$ increases.

Taylor's formula yields that the nominator equals
\[
\varphi''(\vartheta_n)\left(\varphi^{-1}\left(\varphi(u)+\sum_{i=1}^{d-1}\varphi\left(1+ca_n
x_i\right) \right)-u \right),
\]
where $\varphi''(\vartheta_n)$ converges to $\varphi''(u)$ as $n$ increases. Applying Taylor's
formula again yields
\begin{align*}
\varphi^{-1}\left(\varphi(u)+\sum_{i=1}^{d-1}\varphi\left(1+ca_n x_i\right) \right)-u
&=\frac 1{\varphi'\left(\varphi^{-1}(\xi_n)\right)}\sum_{i=1}^{d-1}\varphi\left(1+ca_n x_i \right),
\end{align*}
where $\xi_n$ converges to $\varphi(u)$ as $n$ increases. But
\[
n \sum_{i=1}^d\varphi\left(1+ca_n x_i\right) = \sum_{i=1}^d\frac{\varphi\left(1+ca_n
x_i\right)}{\varphi(1-a_n)} \to_{n\to\infty}\sum_{i=1}^{d-1}(-cx_i)^p.
\]
by condition \eqref{C0}. This yields the assertion.
\end{proof}

\section{Archimax Copulas}

Let $\varphi:(0,1]\to[0,\infty)$ be the generator of an Archimedean copula $C_\varphi(\bfu)=\varphi^{-1}\left(\sum_{i=1}^d\varphi(u_i)\right)$, $\bfu=(u_1,\dots,u_d)\in(0,1]^d$, and let $\norm\cdot_D$ be an arbitrary $D$-norm. Put
\begin{equation}\label{defn:definition of Archimax copula}
C(\bfu):=\varphi^{-1}\left(\norm{(\varphi(u_1),\dots,\varphi(u_d))}_D\right),\qquad \bfu\in(0,1]^d.
\end{equation}
It was established by \citet{char14} that $C$ actually defines a copula on $\R^d$, called \emph{Archimax copula}. Choosing $\norm\cdot_D=\norm\cdot_1$ yields $C(\bfu)=C_\varphi(\bfu)$ and, thus, the concept of Archimax copulas generalizes that of Archimedean copulas considerably.

To include also the generator family $\varphi_p(s)=(1-s)^p$, $s\in[0,1]$, $p\ge 1$, we require the representation of $C$ in equation \eqref{defn:definition of Archimax copula} only for $\bfu\in[\bfu_0,\bfone]\subset(0,1]^d$. There actually exists a rv, whose copula satisfies
\begin{equation*}
C(\bfu)=\varphi^{-1}\left(\norm{(\varphi(u_1),\dots,\varphi(u_d))}_p\right),\qquad \bfu\in[\bfu_0,\bfone]
\end{equation*}
with some $\bfu_0\in(0,1)^d$. This follows from the fact that $\norm{(\abs{x_1}^p,\dots,\abs{x_d}^p)}_D^{1/p}$ is again a $D$-norm, with an arbitrary $D$-norm $\norm\cdot_D$ and $p\ge 1$, see Proposition 2.6.1 and equations (2.14), (2.15) in \citet{falk2019}.

An Archimax copula is in the domain of attraction of a multivariate extreme value distribution, if the generator satisfies condition \eqref{C0}. Precisely, we have the following result.

\begin{prop}
Suppose the generator $\varphi$ satisfies condition \eqref{C0}. Then the corresponding Archimax copula $C$, with arbitrary $D$-norm $\norm\cdot_D$, satisfies $C\in\mathcal D(G)$, where $G(\bfx)=\exp\left(-\norm{\left(\abs{x_1}^p,\dots,\abs{x_d}^p\right)}_D^{1/p}\right)$, $\bfx\le\bfzero\in\R^d$.
\end{prop}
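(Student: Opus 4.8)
The plan is to follow verbatim the route of the proof of Proposition \ref{prop:domain of attraction for Archimedean copulas}, replacing the elementary sum $\sum_{i=1}^d\varphi(u_i)$ by the $D$-norm $\norm{(\varphi(u_1),\dots,\varphi(u_d))}_D$ and using the $1$-homogeneity of $\norm\cdot_D$ wherever the earlier proof used the scaling of the sum. First I would record that the univariate margins of $C$ are again uniform near $1$: setting $u_i=1$ for $i\ne k$ turns the vector $\bfu$ into $\bfone+(u_k-1)\bfe_k$, and since $\varphi(1)=0$ we get $C(\bfone+(u_k-1)\bfe_k)=\varphi^{-1}\big(\norm{\varphi(u_k)\bfe_k}_D\big)=\varphi^{-1}\big(\varphi(u_k)\norm{\bfe_k}_D\big)=\varphi^{-1}(\varphi(u_k))=u_k$, because $\norm{\bfe_k}_D=1$ for every $D$-norm. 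Hence the natural norming constants are $\bfa_n=\bfone/n$, $\bfb_n=\bfone$, the margins of the limit must be $\exp(x_k)$, and (taking logarithms and using $C(\bfone+\bfx/n)\to 1$) the claim $C\in\mathcal D(G)$ reduces to showing, for $\bfx=(x_1,\dots,x_d)\le\bfzero\in\R^d$,
\[
n\Big(1-C\big(\bfone+\tfrac{\bfx}n\big)\Big)=n\left(1-\varphi^{-1}\Big(\norm{\big(\varphi(1+\tfrac{x_1}n),\dots,\varphi(1+\tfrac{x_d}n)\big)}_D\Big)\right)\to_{n\to\infty}\norm{\big(\abs{x_1}^p,\dots,\abs{x_d}^p\big)}_D^{1/p}.
\]

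Next I would rewrite $n=\big(1-\varphi^{-1}(\varphi(1-1/n))\big)^{-1}$ as in the Archimedean proof and factor $\varphi(1-1/n)$ out of the $D$-norm by homogeneity, obtaining
\[
n\Big(1-C\big(\bfone+\tfrac{\bfx}n\big)\Big)=\frac{1-\varphi^{-1}\!\left(\varphi(1-\tfrac1n)\,\norm{\Big(\tfrac{\varphi(1+x_1/n)}{\varphi(1-1/n)},\dots,\tfrac{\varphi(1+x_d/n)}{\varphi(1-1/n)}\Big)}_D\right)}{1-\varphi^{-1}\!\big(\varphi(1-\tfrac1n)\big)}.
\]
By condition \eqref{C0} (writing $1+x_i/n=1-\abs{x_i}/n$, and noting $\varphi(1)=0$ handles the case $x_i=0$) each coordinate of the inner vector converges to $\abs{x_i}^p$, so by continuity of $\norm\cdot_D$ the argument of the outer $D$-norm tends to $t:=\norm{(\abs{x_1}^p,\dots,\abs{x_d}^p)}_D$. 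It then remains to show that $\big(1-\varphi^{-1}(\varphi(1-1/n)\,y_n)\big)/\big(1-\varphi^{-1}(\varphi(1-1/n))\big)\to t^{1/p}$ whenever $y_n\to t$; this is precisely equation \eqref{eqn:property of inverse function} with a \emph{moving} argument, and it is the only point that is not pure bookkeeping.

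To settle that point I would observe that condition \eqref{C0} says $s\mapsto\varphi(1-s)$ is regularly varying at $0$ with index $p$, hence the monotone (increasing) function $s\mapsto 1-\varphi^{-1}(s)$ is regularly varying at $0$ with index $1/p$ — this is exactly the content of \eqref{eqn:property of inverse function}. The uniform convergence theorem for monotone regularly varying functions then upgrades the pointwise limit in \eqref{eqn:property of inverse function} to a limit that is uniform for $x$ in compact subsets of $(0,\infty)$; since $t>0$ (if $\bfx=\bfzero$ the statement is trivial) and $y_n\to t$, sandwiching $y_n$ between two constants in a small neighbourhood of $t$ yields the limit $t^{1/p}$. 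Feeding this back gives $n\big(1-C(\bfone+\bfx/n)\big)\to t^{1/p}=\norm{(\abs{x_1}^p,\dots,\abs{x_d}^p)}_D^{1/p}$, hence $C^n(\bfone+\bfx/n)\to\exp\big(-\norm{(\abs{x_1}^p,\dots,\abs{x_d}^p)}_D^{1/p}\big)=G(\bfx)$. I would close by recalling, from the discussion preceding the proposition, that $\bfx\mapsto\norm{(\abs{x_1}^p,\dots,\abs{x_d}^p)}_D^{1/p}$ is itself a $D$-norm, so $G$ is indeed a standard multivariate max-stable df, with the one-dimensional margins $\exp(x_k)$ matching the norming chosen above. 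The main obstacle, then, is purely the local uniformity in the inverse-function asymptotics; the rest is the computation already performed for Archimedean copulas.
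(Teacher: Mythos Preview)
Your proof is correct and follows exactly the route of the paper's own argument: rewrite $n$ as $\big(1-\varphi^{-1}(\varphi(1-1/n))\big)^{-1}$, factor $\varphi(1-1/n)$ out of the $D$-norm by homogeneity, and invoke condition \eqref{C0} together with equation \eqref{eqn:property of inverse function}. The paper's proof is terser (it simply cites \eqref{C0} and \eqref{eqn:property of inverse function} without isolating the moving-argument issue), so your explicit appeal to the uniform convergence theorem for regularly varying functions supplies a detail the paper leaves to the reader.
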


\begin{proof}
We have for $\bfx=(x_1,\dots,x_d)\le\bfzero\in\R^d$
\begin{align*}
&n\left[1-\varphi^{-1}\left(\norm{\left(\varphi\left(1+\frac{x_1}n\right),\dots,\varphi\left(1+\frac{x_d}n \right) \right)}_D \right) \right]\\
&= \frac{1-\varphi^{-1}\left(\varphi\left(1-\frac 1n \right)\norm{\left(\frac{\varphi\left(1+\frac{x_1}n \right)}{\varphi\left(1-\frac 1n \right)},\dots, \frac{\varphi\left(1+\frac{x_d}n \right)}{\varphi\left(\varphi\left(1-\frac 1n\right) \right)} \right)}_D \right)} {1-\varphi^{-1}\left(\varphi\left(1-\frac 1n\right) \right)}\\
&\to_{n\to\infty}\norm{\left(\abs{x_1}^p,\dots,\abs{x_d}^p \right)}_D^{1/p}
\end{align*}
by condition \eqref{C0} and equation \eqref{eqn:property of inverse function}. Repeating the arguments in the proof of Proposition \ref{prop:domain of attraction for Archimedean copulas} yields the assertion.
\end{proof}

Let the rv $\bfU=(U_1,\dots,U_d)$ follow an Archimax copula with generator function $\varphi$ and $D$-norm $\norm\cdot_D$. Does it also have independent tails, conditional on one of its components? We give a partial answer to this question.

Suppose the underlying  $\norm\cdot_D$ is a logistic one $\norm\cdot_q$, with $q\ge 1$. Then
\begin{align*}
\varphi^{-1}\left(\norm{(\varphi(u_1),\dots,\varphi(u_d))}_q\right) &= \varphi^{-1}\left(\left(\sum_{i=1}^d\varphi(u_i)^q\right)^{1/q}\right)\\
&= \psi^{-1}\left(\sum_{i=1}^d\psi(u_i) \right),
\end{align*}
where
\[
\psi(s):=\varphi(s)^q,\qquad s\in[0,1].
\]
If the generator $\varphi$ satisfies condition \eqref{C0}, then the generator $\psi$ clearly satisfies condition \eqref{C0} as well:
\[
\lim_{s\downarrow 0}\frac{\psi(1-sx)}{\psi(1-s)} = x^{pq},\qquad x >0.
\]
If $\varphi$ satisfies the differentiability conditions in Lemma \ref{lem:characterization of univariate domain of attraction}, then the conclusion of Theorem \ref{theo:main theorem} applies, i.e., with the choice $\norm\cdot_D=\norm\cdot_q$, $q\ge 1$, the rv $\bfU$ has again independent tails, conditional on one of its components.

Set, on the other hand $\bfU=(U,\dots,U)$, where $U$ is a rv that follows the uniform distribution on $(0,1)$. Choose $\norm\cdot_D=\norm\cdot_\infty$ with $\norm{\bfx}_\infty=\max_{1\le i\le d}(\abs{x_i})$. Then we have for every function $\varphi:(0,1]\to[0,\infty)$, which is continuous and strictly decreasing,
\begin{align*}
C(\bfu)&=P(U\le u_1,\dots,U\le u_d)\\
&=\min_{1\le i\le d}u_i\\
&=\varphi^{-1}\left(\norm{(\varphi(u_1),\dots,\varphi(u_d))}_\infty\right),\qquad \bfu\in (0,1]^d.
\end{align*}
The copula $C$ is, therefore, an Archimax copula, but it has completely dependent conditional margins.

\section{Simulation Study}
We conducted a simulation study to illustrate our findings on the conditional tail independence of the
Archimedean Gumbel-Hougaard copula family with dimension $d>2$ and dependence parameter $\vartheta>1$. The condition on $\vartheta$ implies that copula's tails are asymptotically dependent.  
There are several statistical tests to verify whether the tails of a multivariate distribution are asymptotically independent, provided that the latter is in the domain of attraction of a multivariate extreme value df.  
In the bivariate case, some tests have been suggested by \citet{draisma2004}, \citet{husler2009}, Chapter 6.5 in \citet{fahure10}. However, to extend them in higher dimensions than two is not straightforward. 
Therefore, we rely on the hypothesis testing proposed by \citet{guillou2018}, which is based on the componentwise maximum approach and is meant for an arbitrary dimension $d \geq 2$. 
Such a test is based on a system of hypotheses where under the null hypothesis it is assumed that $A(\bft)=1$ for all $\bft\in\calS_d$, i.e. the tails are asymptotically independent, while under the alternative hypothesis it is assumed that $A(\bft)<1$ for at least one $\bft\in\calS_d$, i.e. some tails are asymptotically dependent. Here, $A$ is the Pickands dependence function and $\calS_d$ is $d$-dimensional unit simplex \citep[e.g.,][Ch. 4]{fahure10}. In \citet{guillou2018} the authors proposed to use the test statistic $\widehat{S}_n=\sup_{\bft\in\calS_d}\sqrt{n}|\widehat{A}_n(\bft)-1|$ to decide whether or not to reject null hypothesis, where $\widehat{A}_n$ is an appropriated estimator of the the Pickands dependence function and $n$ is the sample size of the componentwise maxima. Under the null hypothesis, the test statistic converges to a suitable random variable $S$, for large samples. Large values of the observed test statistic  
provide evidence against the null hypothesis and in particular the quantiles of the distribution of $S$ to use for rejection of the null hypothesis are reported in Table 1 of \citet{guillou2018}.

We performed the following simulation experiment. In the first step we
simulated a sample of size $n=110$K of independent observations from a Gumbel-Hougaard copula 
with $d=3$ and $\vartheta=3$. Then, we computed the vector of normalized
componentwise maxima $m_{n,j}=\max_{i=1,\ldots,n}(u_{i,j}-b_{n,j})/a_{n,j}$ with $a_{n,j}=n$, $b_{n,j}=1$ and $j=1,\ldots,d$. In the second step, for
$u=0.99$ and $\varepsilon=0.0005$ we selected the observations $(u_{i,1},\ldots,u_{i,j-1},u_{i,j+1}, \ldots, u_{i,d})$ such that $u_{i,j}\in[u-\varepsilon,u+\varepsilon]$, $i=1,\ldots,n$. To work with a sample with fixed size we considered only $k=1000$ of such observations.
Then, we computed the vector of normalized
componentwise maxima $m^{*}_{k,s}=\max_{i=1,\ldots,k} u_{i,s}/(c a_{k,s})$, where $c= \big(\varphi'(u)^2/\varphi''(u)\big)^{1/\vartheta}$ and $a_{k,s}:=1-\varphi^{-1}(1/k)$ with $\varphi(t):=(-\log(t))^\vartheta$ and $s=1,\ldots,j-1,j-1,\ldots,d$. We repeated the first and second 
steps $N=100$ times obtaining two samples of componentwise maxima, one from the $d$-dimensional copula and one from the corresponding $d-1$ conditional distribution. 
The top-left and top-right panel of Figure \ref{fig:simulated_data} display an example of maxima obtained
from the Gumbel-Hougaard and the associated estimate of the Pickands dependence function, respectively. 
A strong dependence among the variables is evident. 
To see this better in the 
middle panels the maxima of a pair of variables and the relative estimate of the Pickands dependence function are reported. Indeed, the latter is close to lower bound $\max(1-t, t)$, i.e. the case of complete dependence. 
The bottom panels of Figure \ref{fig:simulated_data} display the maxima obtained with the second step of the simulation experiment and the associated estimate of the Pickands dependence function. These maxima, in contrast to the previous ones, seem to be independent and indeed the estimated Pickands dependence function is close to the upper bound (i.e. the case of independence).
Then, we applied the hypothesis test with the sample of maxima obtained in the first and second step of the simulation experiment, leading to the observed values of test statistic of
$3.843$ and $0.348$, respectively. Since the $0.95$-quantiles of the distribution of $S$ are $1.300$ and $0.960$ for $d=3$ and $d=2$, respectively \citep{guillou2018}, we conclude that we 
reject the hypothesis of tails independence
with the first sample of maxima whereas we do not reject it with the second sample. These results are consistent with our theoretical finding.

We repeated this simulation experiment $M=1000$ times and with the maxima obtained with the second step of the simulation experiment we computed the rejection rate of the null hypothesis. Since we simulated data under the null hypothesis we expect that the rejection rate is close the nominal value of the first type error, i.e. $5\%$. We did this for different dimension $d$ and values of the parameter $\vartheta$. The results are collected in Table \ref{tab:simulation}.
Again the simulation results show that our theoretical findings are correct.

%
%
%
\begin{figure}[t!]
	\centering
	\includegraphics[width=0.42\textwidth, page=1]{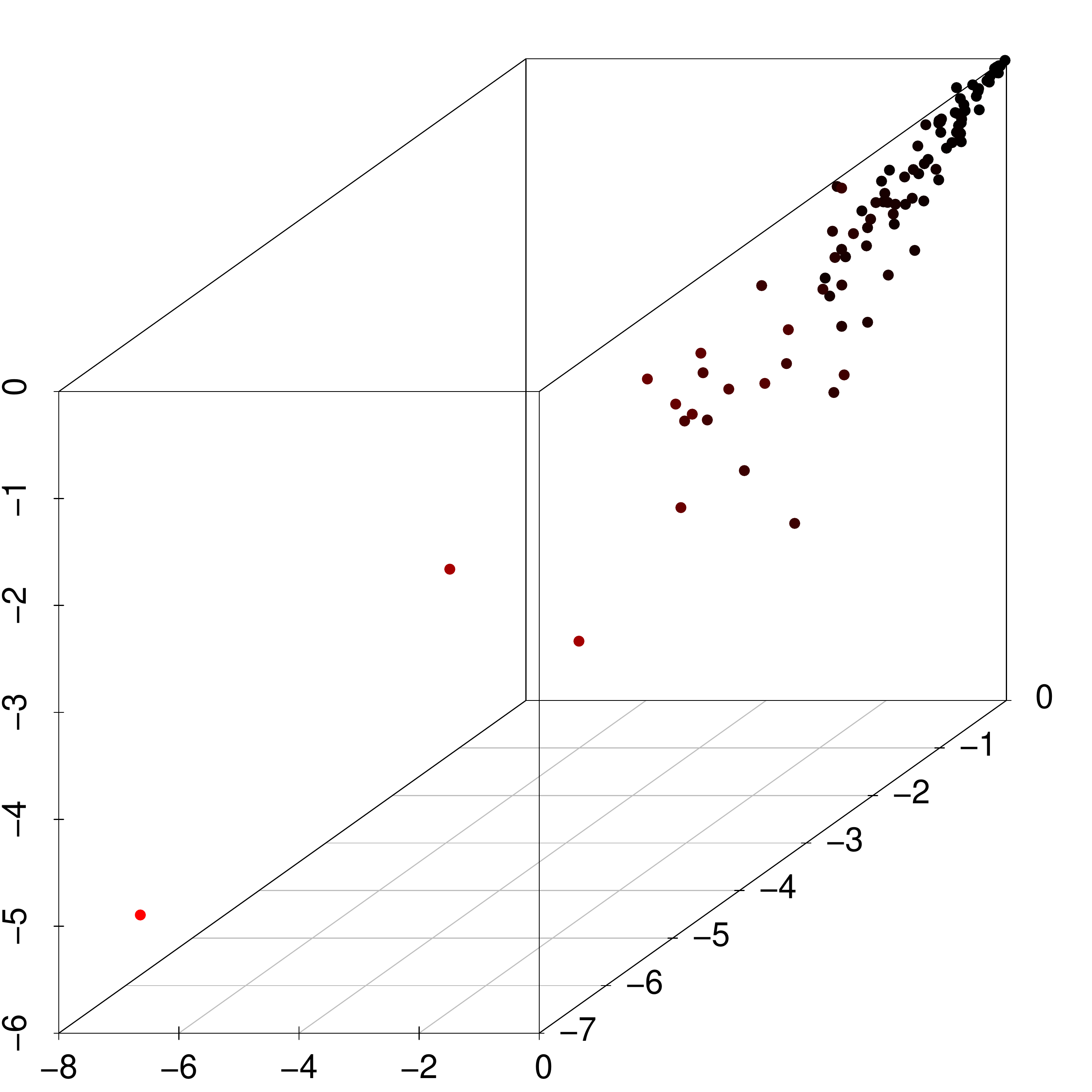}
	\includegraphics[width=0.42\textwidth, page=6]{Images/plots.pdf}\\
	\includegraphics[width=0.42\textwidth, page=2]{Images/plots.pdf}
	\includegraphics[width=0.42\textwidth, page=3]{Images/plots.pdf}
	\includegraphics[width=0.42\textwidth, page=4]{Images/plots.pdf}
	\includegraphics[width=0.42\textwidth, page=5]{Images/plots.pdf}
	\caption{Top-left panel displays the maxima obtain with the data simulated from a trivariate Gumbel-Hougaard copula with $\vartheta=4$. The middle one shows the maxima corresponding to two components. Finally, the one below shows the maxima obtain with the simulated data where one component is set to be a high value. The right-column report the relative estimated Pickands dependence function.}
	\label{fig:simulated_data}
\end{figure}
%
%

%
\begin{table}[t!]
\begin{center}
\caption{Rejection rate (in percentage) of the null hypothesis (asymptotic independent tails) based on $M=1000$ simulations.}
\begin{tabular}{ccccccc}
\toprule
Dimension & \multicolumn{6}{c}{Dependence parameter}\\
$d$ & $\vartheta:$ & $2$ & $3$ & $4$ & $5$ & $6$\\
\hline
$3$ &&$5.414$&$4.877$&$5.438$&$5.352$&$5.725$\\
$4$ &&$5.216$&$5.783$&$5.491$&$4.841$&$4.591$\\
$5$ &&$5.353$&$4.396$&$5.791$&$4.685$&$4.454$\\
\bottomrule
\end{tabular}
\label{tab:simulation}
\end{center}
\end{table}
\bibliographystyle{chicago}
\bibliography{evt}

\begin{thebibliography}{}

\bibitem[\protect\citeauthoryear{Charpentier, Foug{\`e}res, Genest, and
  Ne\v{s}lehov{\'a}}{Charpentier et~al.}{2014}]{char14}
Charpentier, A., A.~Foug{\`e}res, C.~Genest, and J.~Ne\v{s}lehov{\'a} (2014).
\newblock Multivariate {A}rchimax copulas.
\newblock {\em J. Multivariate Anal.\/}~{\em 126}, 118--136.

\bibitem[\protect\citeauthoryear{Charpentier and Segers}{Charpentier and
  Segers}{2009}]{chas09}
Charpentier, A. and J.~Segers (2009).
\newblock Tails of multivariate {A}rchimedean copulas.
\newblock {\em J. Multivariate Anal.\/}~{\em 100\/}(7), 1521--1537.

\bibitem[\protect\citeauthoryear{Draisma, Drees, Ferreira, De~Haan,
  et~al.}{Draisma et~al.}{2004}]{draisma2004}
Draisma, G., H.~Drees, A.~Ferreira, L.~De~Haan, et~al. (2004).
\newblock Bivariate tail estimation: dependence in asymptotic independence.
\newblock {\em Bernoulli\/}~{\em 10\/}(2), 251--280.

\bibitem[\protect\citeauthoryear{Falk}{Falk}{2019}]{falk2019}
Falk, M. (2019).
\newblock {\em Multivariate Extreme Value Theory and D-Norms}.
\newblock New York; to appear: Springer.

\bibitem[\protect\citeauthoryear{Falk, H{\"u}sler, and Reiss}{Falk
  et~al.}{2011}]{fahure10}
Falk, M., J.~H{\"u}sler, and R.-D. Reiss (2011).
\newblock {\em Laws of Small Numbers: Extremes and Rare Events\/} (3 ed.).
\newblock Basel: Birkh{\"a}user.

\bibitem[\protect\citeauthoryear{Galambos}{Galambos}{1987}]{gal87}
Galambos, J. (1987).
\newblock {\em The Asymptotic Theory of Extreme Order Statistics\/} (2 ed.).
\newblock Malabar: Krieger.

\bibitem[\protect\citeauthoryear{Guillou, Padoan, and Rizzelli}{Guillou
  et~al.}{2018}]{guillou2018}
Guillou, A., S.~A. Padoan, and S.~Rizzelli (2018).
\newblock Inference for asymptotically independent samples of extremes.
\newblock {\em Journal of Multivariate Analysis\/}~{\em 167}, 114--135.

\bibitem[\protect\citeauthoryear{H{\"u}sler and Li}{H{\"u}sler and
  Li}{2009}]{husler2009}
H{\"u}sler, J. and D.~Li (2009).
\newblock Testing asymptotic independence in bivariate extremes.
\newblock {\em Journal of Statistical Planning and Inference\/}~{\em 139\/}(3),
  990--998.

\bibitem[\protect\citeauthoryear{McNeil and Ne\v{s}lehov{\'a}}{McNeil and
  Ne\v{s}lehov{\'a}}{2009}]{mcnn09}
McNeil, A.~J. and J.~Ne\v{s}lehov{\'a} (2009).
\newblock Multivariate archimedean copulas, $d$-monotone functions and
  $\ell_1$-norm symmetric distributions.
\newblock {\em Ann. Statist.\/}~{\em 37\/}(5B), 3059--3097.

\bibitem[\protect\citeauthoryear{Nelsen}{Nelsen}{2006}]{nel06}
Nelsen, R.~B. (2006).
\newblock {\em An Introduction to Copulas\/} (2 ed.).
\newblock Springer Series in Statistics. New York: Springer.

\bibitem[\protect\citeauthoryear{Sklar}{Sklar}{1959}]{sklar59}
Sklar, A. (1959).
\newblock Fonctions de r{\'e}partition {\`a} $n$ dimensions et leurs marges.
\newblock {\em Pub. Inst. Stat. Univ. Paris\/}~{\em 8}, 229--231.

\bibitem[\protect\citeauthoryear{Sklar}{Sklar}{1996}]{sklar96}
Sklar, A. (1996).
\newblock Random variables, distribution functions, and copulas -- a personal
  look backward and forward.
\newblock In L.~R{\"u}schendorf, B.~Schweizer, and M.~D. Taylor (Eds.), {\em
  Distributions with fixed marginals and related topics}, Volume~28 of {\em
  Lecture Notes -- Monograph Series}, Hayward, CA, pp.\  1--14. Institute of
  Mathematical Statistics.

\end{thebibliography}

\end{document}